\newcommand{\pushright}[1]{\ifmeasuring@#1\else\omit\hfill$\displaystyle#1$\fi\ignorespaces}
\newcommand{\pushleft}[1]{\ifmeasuring@#1\else\omit$\displaystyle#1$\hfill\fi\ignorespaces}
\newcommand{\normsup}[1]{\left\|#1\right\|_{\scriptscriptstyle\infty}}
\newcommand{\R}{\mathbb{R}}
\newcommand{\rmd}{\mathrm{d}}
\newcommand{\rmi}{\mathrm{i}}
\newcommand{\calN}{\mathcal{N}}
\newcommand{\calP}{\mathcal{P}}
\newcommand{\calQ}{\mathcal{Q}}
\newcommand{\calX}{\mathcal{X}}
\theoremstyle{plain}
\newtheorem{theorem}{Theorem}[section]
\newtheorem{lemma}[theorem]{Lemma}
\newtheorem{corollary}[theorem]{Corollary}
\newtheorem{remark}{Remark}[section]
\theoremstyle{definition}
\newtheorem{obs}{Observation}
\author{S\'{e}bastien Ott}
\address{D\'epartement de Math\'ematiques, Universit\'e de Fribourg,
	Chemin du Mus\'ee 23, 1700 Fribourg, Switzerland}
\email{ott.sebast@gmail.com}
\date{\today}
\title{A note on the renormalization group approach to the Central Limit Theorem}
\begin{document}

\begin{abstract}
	A proof of the Central Limit Theorem using a renormalization group approach is presented. The proof is conducted under a third moment assumption and shows that a suitable renormalization group map is a contraction over the space of probability measures with a third moment. This is by far not the most optimal proof of the CLT, and the main interest of the proof is its existence, the CLT being the simplest case in which a renormalization group argument should apply. None of the tools used in this note are new. Similar proofs are known amongst expert in limit theorems, but explicit references are not so easy to come by for non-experts in the field.
\end{abstract}

\maketitle


\section*{A word from the author}

I am neither a renormalization group expert, nor a limit theorem expert. So it is perfectly possible that the results described this note appeared somewhere else \footnote{Indeed, after the second version of this note appeared on the arXiv, I received a reference to a paper, ~\cite{Hamedani+Walter-1984}, containing the proof presented in section~\ref{sec:Banach_FP}. It is historically interesting to note that they introduce (and use) the Fourier based metrics: the paper is from 1984 and the Fourier based metrics where believed to have been introduced in~\cite{Gabetta+Toscani+Wennberg-1995}, more than 10 years later.}, or that I missed relevant references to the ``Renormalization group approach to CLT'' story. In both cases, I would be extremely grateful to receive pointers towards the relevant references. After the appearance of (the first version of) this note on the arXiv, I received several emails with references and/or comments. The updated version is motivated by the content of those mails, I hope that the enhanced bibliography (which is still far from being exhaustive) will help other people interested in these questions as much as it helped me. Many thanks to all the people who took the time to send me information!

\section*{Introduction/bibliographical review}

This note is about providing a simple renormalization group style proof of the classical Central Limit Theorem. The CLT is likely to be the most well known feat of probability theory, so I do not intend to say anything new about it. The purpose of this work is more to perform a ``sanity check'' for the renormalization group approach: if one can not get the method to work in the simplest instance it should apply to, one has little chances of success in much more involved situations. This note is not the first result about the renormalization road to the CLT, and one can distinguish two general approaches: a ``Lyapunov'' method, consisting of solving the fixed point equation and, more or less explicitly, finding a suitable Lyapunov function for the discrete dynamical system \(\mu_{n+1}= T\mu_n\) (where \(T\) is the renormalization map); and a ``Banach'' method, consisting in finding a subset of the set of probability measures \(\Omega\), and a distance \(d\) on \(\Omega\) such that 1) \((\Omega,d)\) is complete 2) the renormalization operation is a contraction on \((\Omega,d)\). On the one side, the ``Banach'' method yields more quantitative results and \emph{one does not need to solve the fixed point equation} to prove convergence towards something. On the other side, the ``Lyapunov'' method is more flexible and allows for weaker conditions. Section~\ref{sec:Banach_FP} contains an example of what I called a ``Banach'' approach, while Section~\ref{sec:Lypunov_proof} presents a ``Lyapunov'' approach.

\subsection*{``Lyapunov'' method}
On the ``perturbative side'' (perturbation around the fixed point), one can find the articles~\cite{Jona-Lasinio-1975, Calvo+Cuchi+Esteve+Falceto-2010, Li+Sinai-2014} which deal with the transform of equation~\eqref{eq:renorm_transform} (and its natural generalizations) that have as fixed points stable laws. Suitable neighbourhood of the fixed points are then shown to contract to the fixed point. Reviews/introductory texts (mostly focused on the Gaussian fixed point) can be found in~\cite[pages 131-132]{Sinai-1992},~\cite{Jona-Lasinio-2001},~\cite[Section 10.3]{Koralov+Sinai-2007}. A non-rigorous discussion on that problem can be found in~\cite{Amir-2020}. The same set of ideas has been used to prove CLT for dependent fields, as this is not the main topic of this note, I will only mention a few early references:~\cite{Bleher+Sinai-1973} deal with hierarchical spin models, their method influenced several of the papers mentioned previously;~\cite{Gallavotti+Jona-Lasinio-1975, Cassandro+Jona-Lasinio-1978} contain reviews of CLT for the ``magnetization'' of (weakly) dependent lattice random fields (the second also provides a general discussion about renormalization and CLT).

Another route has also been studied: the entropy can be used as a Lyapunov function for the dynamical system underlying the renormalization process, see~\cite{Linnik-1959} for the CLT using entropy,~\cite{Carlen+Soffer-1991} for an ``Lyapunov function'' proof, and~\cite{Johnson-2004} for a book on the topic.

\subsection*{``Banach'' method}
A Banach fixed point argument can be found in~\cite{Neininger+Ruschendorf-2004} (see the slides~\cite{Neininger-2007} for a discussion of the application to the CLT). The proof there is very similar to the one of Section~\ref{sec:Banach_FP}: use of ideal metric (Zolotarev metric -introduced in~\cite{Zolotarev-1976,Zolotarev-1978}-, in~\cite{Neininger-2007} versus Fourier based metric here) to prove contraction properties of the renormalization map.

\subsection*{Present note}
This note contain an example of each methods. As said above, the approaches taken here are not new: the renormalization group map is the one of \cite{Jona-Lasinio-1975} and is probably older, the contraction principle of Section~\ref{sec:Banach_FP} is included in the paper~\cite{Goudon+Junca+Toscani-2002}: equations (6), (7) are morally the contraction used here (which is a property of certain ideal metrics, see Section~\ref{sec:conclusion}). The argument presented in~\cite{Neininger+Ruschendorf-2004,Neininger-2007} is the same as the one of Section~\ref{sec:Banach_FP} but with a different metric (see Section~\ref{sec:conclusion}).

The argument of Section~\ref{sec:Lypunov_proof} is a simplified version of an argument that has been communicated to me by Jiwoon Park. It provides an example of the larger flexibility of the ``Lyapunov'' method, and applies with only a second moment condition.

\subsection*{General comment}
It is worth noting that a contraction principle is underlying most approaches to the CLT, but the latter is usually formulated as follows: one has a sequence of operators, \((T_n)_{n\geq 1}\) (which acts by \(n\)-fold convolution and rescaling by \(\sqrt{n}\)), having the normal distribution as fixed point, and having ``law-dependent contraction constant'' going to \(0\) with \(n\), rather than iterating a fixed transform with uniform (over probability distribution) contraction constant.

Moreover, the proof of Section~\ref{sec:Lypunov_proof} can be seen as a slightly convoluted re-writing of the usual proof of the CLT through pointwise convergence of characteristic function, I nevertheless find it a nice example of global asymptotic stability of the fixed point of the renormalization process. The argument of Section~\ref{sec:Banach_FP} is more quantitative but more restrictive.

\section{Renormalization road to CLT: a ``Banach'' result}
\label{sec:Banach_FP}

As nothing is really new in the content of this text, I will work under a third moment assumption. The latter can be relaxed to a \(2+\epsilon\) moment assumption: the distance used (denoted \(\rmd_3\) in the text) is a particular case of the Fourier-based distances introduced in~\cite{Gabetta+Toscani+Wennberg-1995}. Replacing this distance by its \(\rmd_{2+\epsilon}\) version handles the extension (\(3\) is chosen for aesthetic reasons and to slightly simplify appendix~\ref{app:Metric_structure}). A review on these metrics and their applications can be found in~\cite{Carrillo+Toscani-2007}.

\subsection{Framework}

Probability measures on \(\R\) will be denoted \(\nu,\mu\), the expectation under \(\nu\) will be denoted \(E_{\nu}\). Inside expected value, \(X\) will denote a random variable of the relevant law, and \((X,Y)\) a random vector of law \(\nu\otimes \mu\) when this case is considered. Let \(\calP_r=\calP_r(\R)\) be the set of probability measures on \(\R\) with finite \(r\)th absolute moment. Denote
\begin{equation*}
	\calQ_3 := \{\nu\in \calP_3:\ E_{\nu}(X) = 0,\ E_{\nu}(X^2) =1 \},
\end{equation*}the set of centred, reduced probability measures with a third absolute moment. Equip \(\calQ_{3}\) with the Fourier-based distance
\begin{equation}
	\rmd_3(\nu,\mu) = \sup_{\xi\in \R^*} \frac{\big| \varphi_{\nu}(\xi) - \varphi_{\mu}(\xi) \big|}{|\xi|^3}
\end{equation}where \(\R^* = \R\setminus \{0\}\), and
\begin{equation}
	\varphi_{\nu}(\xi) = E_{\nu}\big(e^{\rmi X\xi}\big),
\end{equation}is the characteristic function of \(\nu\) (\cite{Carrillo+Toscani-2007} defines it with a \(-\) sign in the exponential).

\begin{lemma}
	\label{lem:Finite_metric_space_struct}
	\(\rmd_3\) is a finite distance on \(\calQ_3\), and convergence in \(\rmd_3\) implies weak convergence.
\end{lemma}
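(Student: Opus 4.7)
The plan is to establish the three claims (finiteness, the distance axioms, and implication of weak convergence) in turn. The only non-trivial point is finiteness, and everything rests on the classical Taylor remainder estimate for the characteristic function:
\begin{equation*}
	\Bigl| \varphi_{\nu}(\xi) - 1 - \rmi \xi E_{\nu}(X) + \tfrac{\xi^2}{2}E_{\nu}(X^2) \Bigr| \;\leq\; \tfrac{|\xi|^3}{6}\, E_{\nu}(|X|^3),
\end{equation*}
which follows from applying $|e^{\rmi t}-1-\rmi t -\tfrac{(\rmi t)^2}{2}|\leq \tfrac{|t|^3}{6}$ pointwise and then taking expectations.

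For finiteness, I would use the fact that $\nu,\mu\in\calQ_3$ are centred and reduced, so both characteristic functions have the common $\xi$ and $\xi^2$ coefficients; the above Taylor estimate applied to $\nu$ and to $\mu$, together with the triangle inequality, gives $|\varphi_{\nu}(\xi)-\varphi_{\mu}(\xi)|\leq \tfrac{|\xi|^3}{6}\bigl(E_{\nu}(|X|^3)+E_{\mu}(|X|^3)\bigr)$ uniformly in $\xi\in\R^*$. Dividing by $|\xi|^3$ and taking the supremum bounds $\rmd_3(\nu,\mu)$ by a finite quantity depending only on the third absolute moments. This is the step that really uses that we are on $\calQ_3$ and not merely on $\calP_3$: the cancellation of the $O(1)$, $O(\xi)$, $O(\xi^2)$ terms is what makes the quotient by $|\xi|^3$ bounded near $\xi=0$ (away from $0$ the quotient is bounded by $2/|\xi|^3$ anyway).

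For the distance axioms, symmetry and the triangle inequality follow immediately by taking the supremum of the pointwise inequalities $|\varphi_\nu-\varphi_\mu| = |\varphi_\mu-\varphi_\nu|$ and $|\varphi_\nu-\varphi_\mu|\leq |\varphi_\nu-\varphi_\lambda|+|\varphi_\lambda-\varphi_\mu|$ divided by $|\xi|^3$. For positive-definiteness, $\rmd_3(\nu,\mu)=0$ forces $\varphi_\nu=\varphi_\mu$ on $\R^*$, hence on $\R$ by continuity and the normalization $\varphi_\nu(0)=\varphi_\mu(0)=1$; injectivity of the Fourier transform on probability measures then gives $\nu=\mu$.

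Finally, for the implication to weak convergence, if $\rmd_3(\nu_n,\nu)\to 0$ then for every fixed $\xi\in\R$, $|\varphi_{\nu_n}(\xi)-\varphi_{\nu}(\xi)|\leq |\xi|^3\,\rmd_3(\nu_n,\nu)\to 0$, so the characteristic functions converge pointwise to $\varphi_\nu$, which is continuous at $0$ and is the characteristic function of a genuine probability measure; L\'evy's continuity theorem then yields $\nu_n\Rightarrow \nu$. The only possible obstacle is the finiteness step, but it is routine once one notices that the first three Taylor coefficients of $\varphi_\nu$ and $\varphi_\mu$ agree on $\calQ_3$.
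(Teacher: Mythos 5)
Your proposal is correct and follows essentially the same route as the paper: distance axioms via injectivity of the characteristic function, finiteness via cancellation of the zeroth-, first- and second-order Taylor coefficients on \(\calQ_3\), and weak convergence via pointwise convergence of characteristic functions plus L\'evy's continuity theorem. The one small difference is in the finiteness step, where your explicit remainder bound \(|e^{\rmi t}-1-\rmi t-(\rmi t)^2/2|\leq |t|^3/6\) yields the uniform estimate \(\rmd_3(\nu,\mu)\leq \tfrac{1}{6}\big(E_{\nu}(|X|^3)+E_{\mu}(|X|^3)\big)\) for all \(\xi\) at once, which is slightly cleaner and more quantitative than the paper's qualitative Taylor remainder combined with a split into \(0<|\xi|<1\) and \(|\xi|\geq 1\).
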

This result can be imported form~\cite{Gabetta+Toscani+Wennberg-1995, Carrillo+Toscani-2007}, but a proof is included in Appendix~\ref{app:Metric_structure}. The goal will be to study convergence towards a normal distribution \(\calN(0,1)\). Denote \(\gamma\) the normal law:
\begin{equation}
	d\gamma(x) = \frac{1}{\sqrt{2\pi}} e^{-x^2/2} dx.
\end{equation}One obviously has \(\gamma \in \calQ_3\).

\begin{remark}
	It is worth mentioning that \((\calQ_3,\rmd_3)\) is not expected to be a complete metric space, but asking for \(3+\epsilon\) moments rather than \(3\) guarantees completeness (see~\cite[Proposition 2.7]{Carrillo+Toscani-2007} and the follow-up discussion).
\end{remark}

\subsection{Renormalization transform}

Denote \(\nu*\mu\) the convolution of \(\nu\) and \(\mu\) (the law of \(X+Y\) where \(X\sim \nu, Y\sim \mu\) are independent). Also denote \([\nu]_{\lambda}\) the law of \(\lambda X\), where \(X\sim \nu\). One then consider the following renormalization transformation on probability measures: \(T\nu\) is the law of \(2^{-1/2}(X+Y)\) where \(X,Y\) are independent random variables of law \(\nu\):
\begin{equation}
\label{eq:renorm_transform}
	E_{T\nu}(f) = E_{\nu\otimes \nu}\Big(f\big((X+Y)/\sqrt{2}\big)\Big).
\end{equation}In words: \(T\) maps \(\nu\) to the renormalized convolution of \(\nu\) with itself (\(T\nu = [\nu^{*2}]_{2^{-1/2}}\)). Taking the sum is the ``coarse graining'' part of a renormalization step and dividing by \(\sqrt{2}\) is the ``rescaling'' part.

\begin{lemma}
	\label{lem:stability}
	If \(\nu\in \calQ_3\), then \(T\nu\in \calQ_3\).
\end{lemma}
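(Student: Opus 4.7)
The plan is to verify directly the three conditions defining $\calQ_3$ for the measure $T\nu$, using that $T\nu$ is the law of $Z := (X+Y)/\sqrt{2}$ with $X,Y$ independent and distributed according to $\nu$.

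First I would compute the mean. By linearity of expectation and independence,
\begin{equation*}
E_{T\nu}(Z) = \frac{1}{\sqrt{2}}\bigl(E_\nu(X) + E_\nu(Y)\bigr) = 0,
\end{equation*}
since $\nu \in \calQ_3$. Next, for the variance, expanding $(X+Y)^2$ and using independence together with $E_\nu(X)=E_\nu(Y)=0$ and $E_\nu(X^2)=E_\nu(Y^2)=1$ gives
\begin{equation*}
E_{T\nu}(Z^2) = \tfrac{1}{2}\bigl(E_\nu(X^2) + 2 E_\nu(X)E_\nu(Y) + E_\nu(Y^2)\bigr) = 1.
\end{equation*}

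The only remaining point is the finiteness of the third absolute moment, which I expect to be the main (though still elementary) step. Here I would invoke Minkowski's inequality in $L^3$: since $X, Y$ both have finite third absolute moment,
\begin{equation*}
\bigl(E|X+Y|^3\bigr)^{1/3} \leq \bigl(E|X|^3\bigr)^{1/3} + \bigl(E|Y|^3\bigr)^{1/3} = 2\bigl(E_\nu|X|^3\bigr)^{1/3},
\end{equation*}
so that $E_{T\nu}|Z|^3 = 2^{-3/2} E|X+Y|^3 \leq (8/2^{3/2}) E_\nu|X|^3 < \infty$. Alternatively, one could use the crude pointwise bound $(|X|+|Y|)^3 \leq 4(|X|^3+|Y|^3)$ and conclude in the same way. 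Combining the three points shows $T\nu \in \calQ_3$.
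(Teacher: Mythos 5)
Your proof is correct and follows essentially the same route as the paper: direct verification of the mean, the variance, and the finiteness of the third absolute moment of $T\nu$. The only (immaterial) difference is that you bound $E|X+Y|^3$ via Minkowski's inequality in $L^3$ (or the convexity bound), obtaining the constant $8$, whereas the paper uses a cruder pointwise estimate yielding $16$; either suffices since only finiteness is needed.
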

\begin{proof}
	From the definition, one has that if \(\nu\) has a second moment, and \(E_{\nu}(X) = 0\), then \(E_{T\nu}(X) =0\), and \(E_{T\nu}(X^2) = E_{\nu}(X^2)\). Then,
	\begin{equation*}
		E_{T\nu}(|X|^3) = \frac{1}{2^{3/2}}E_{\nu\otimes \nu}(|X+Y|^3)\leq \frac{16}{2^{3/2}} E_{\nu}(|X|^3)<\infty.
	\end{equation*}
\end{proof}

The goal of this note is to study the CLT, so the main interest of this transformation is
\begin{equation}
	\label{eq:fixed_pt}
	T\gamma = \gamma,
\end{equation}which is a standard consequence of the stability of the Gaussian distribution.

The link with the CLT is as follows: if \(X_1,X_2,\dots\) form an i.i.d. sequence of law \(\nu\in \calQ_3\), then \(T^n \nu \) is the law of
\begin{equation*}
	\frac{1}{2^{n/2}}\sum_{k=1}^{2^n} X_k \equiv \frac{1}{\sqrt{N}}\sum_{k=1}^{N} X_k,
\end{equation*}where \(N= 2^n\).

\subsection{Contraction and CLT}

\begin{theorem}
	\label{thm:contraction_principle}
	The application \(T\) is a contraction on \((\calQ_3,\rmd_3)\) with contraction constant \(\leq 2^{-1/2}\). In particular (by~\eqref{eq:fixed_pt}),
	\begin{equation}
		\rmd_3(T^n \nu, \gamma) \leq 2^{-n/2} \rmd_3(\nu, \gamma).
	\end{equation}
\end{theorem}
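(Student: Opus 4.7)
My plan is to work directly with characteristic functions, exploiting the fact that the transformation $T$ corresponds to a very simple operation at the level of $\varphi$: if $X,Y$ are i.i.d. with law $\nu$, then the characteristic function of $T\nu$ is
\begin{equation*}
	\varphi_{T\nu}(\xi) = E_{\nu\otimes\nu}\bigl(e^{\rmi\xi(X+Y)/\sqrt{2}}\bigr) = \varphi_{\nu}(\xi/\sqrt{2})^{2}.
\end{equation*}
The key algebraic trick will be the factorization $a^{2}-b^{2}=(a-b)(a+b)$, which converts the quadratic nonlinearity in $\nu$ into a product where one factor is exactly the type of difference controlled by $\rmd_3$ and the other factor is uniformly bounded.

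Concretely, the first step is to write, for $\nu,\mu\in\calQ_3$,
\begin{equation*}
	\varphi_{T\nu}(\xi)-\varphi_{T\mu}(\xi) = \bigl[\varphi_{\nu}(\xi/\sqrt{2})-\varphi_{\mu}(\xi/\sqrt{2})\bigr]\bigl[\varphi_{\nu}(\xi/\sqrt{2})+\varphi_{\mu}(\xi/\sqrt{2})\bigr].
\end{equation*}
The second step is the change of variable $\eta=\xi/\sqrt{2}$, which multiplies the denominator $|\xi|^{3}$ by $2^{3/2}|\eta|^{3}$; the third step is the trivial estimate $|\varphi_{\nu}|,|\varphi_{\mu}|\leq 1$, giving $|\varphi_{\nu}(\eta)+\varphi_{\mu}(\eta)|\leq 2$. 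Dividing by $|\xi|^3$ and taking the supremum over $\xi\in\R^{*}$ then yields
\begin{equation*}
	\rmd_{3}(T\nu,T\mu)\leq \frac{2}{2^{3/2}}\,\rmd_{3}(\nu,\mu)=2^{-1/2}\,\rmd_{3}(\nu,\mu),
\end{equation*}
which is the claimed contraction.

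For the iterated estimate I would combine this inequality with the fixed point identity $T\gamma=\gamma$ stated in \eqref{eq:fixed_pt}, so that $\rmd_{3}(T^{n}\nu,\gamma)=\rmd_{3}(T^{n}\nu,T^{n}\gamma)\leq 2^{-n/2}\rmd_{3}(\nu,\gamma)$, with finiteness of the initial distance guaranteed by Lemma~\ref{lem:Finite_metric_space_struct} and stability of $\calQ_3$ under $T$ guaranteed by Lemma~\ref{lem:stability}. I do not really expect a serious obstacle here: the only thing one needs to double-check is that the bound $|\varphi_{\nu}+\varphi_{\mu}|\leq 2$ is tight enough to deliver the announced constant $2^{-1/2}$, which it is. The mildly curious feature of the proof is that the contraction factor $2^{-1/2}$ comes from the exact balance between the scaling factor $2^{3/2}$ extracted from $|\xi|^{3}$ and the trivial factor $2$ from bounding $|\varphi_{\nu}+\varphi_{\mu}|$; choosing a different exponent in $\rmd_{r}$ would just shift this balance, which is exactly the mechanism alluded to in the paragraph about $\rmd_{2+\epsilon}$.
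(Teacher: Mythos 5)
Your proposal is correct and follows essentially the same route as the paper: the identity \(\varphi_{T\nu}(\xi)=\varphi_{\nu}(\xi/\sqrt{2})^{2}\), the factorization \(a^{2}-b^{2}=(a-b)(a+b)\), the change of variable absorbing \(2^{3/2}\) from \(|\xi|^{3}\), and the bound \(|\varphi_{\nu}+\varphi_{\mu}|\leq 2\) are exactly the steps in the paper's proof. Nothing to add.
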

\begin{proof}
	The proof is almost trivial. Let \(\mu,\nu\in \calQ_3\). First, notice that
	\begin{equation}
		\varphi_{T\nu}(\xi) = E_{\nu\otimes \nu}(e^{\rmi \xi (X+Y)/\sqrt{2}}) = \varphi_{\nu}(\xi/\sqrt{2})^2.
	\end{equation}
	 Then, writing the definition, one has
	\begin{align*}
		\rmd_3(T\nu, T\mu) &= \sup_{\xi\in \R^*} \frac{\big|\varphi_{\nu}(\xi/\sqrt{2})^2 - \varphi_{\mu}(\xi/\sqrt{2})^2 \big|}{2^{3/2}|\xi/\sqrt{2}|^3}\\
		&= 2^{-3/2} \sup_{\xi\in \R^*} \frac{\big|\varphi_{\nu}(\xi)^2 - \varphi_{\mu}(\xi)^2 \big|}{|\xi|^3}\\
		&= 2^{-3/2} \sup_{\xi\in \R^*} \frac{\big|\varphi_{\nu}(\xi) - \varphi_{\mu}(\xi) \big|}{|\xi|^3}\big|\varphi_{\nu}(\xi) + \varphi_{\mu}(\xi) \big|\\
		&\leq 2^{-1/2} \sup_{\xi\in \R^*} \frac{\big|\varphi_{\nu}(\xi) - \varphi_{\mu}(\xi) \big|}{|\xi|^3} = 2^{-1/2} \rmd_3(\nu,\mu),
	\end{align*}as \(\normsup{\varphi_{a}} = 1\) for any probability measure \(a\).
\end{proof}

\begin{corollary}[Central Limit Theorem]
	\label{cor:CLT}
	Let \(\nu\in \calQ_3\). Let \(X_1,X_2,\dots\) be an i.i.d. sequence of law \(\nu\). Then,
	\begin{equation}
		\frac{1}{\sqrt{N}}\sum_{k=1}^N X_k \xrightarrow{N\to\infty} \calN(0,1),
	\end{equation}where the convergence is \(\rmd_3\) distance (and therefore also in law).
\end{corollary}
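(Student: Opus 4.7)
The plan is to combine Theorem~\ref{thm:contraction_principle}, which directly gives convergence along the dyadic subsequence $N=2^n$, with a short direct Fourier estimate that covers all $N$.

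First I would recall from the discussion preceding Theorem~\ref{thm:contraction_principle} that $T^n\nu$ is the law of $2^{-n/2}\sum_{k=1}^{2^n} X_k$. Lemma~\ref{lem:Finite_metric_space_struct} guarantees $\rmd_3(\nu,\gamma)<\infty$, so Theorem~\ref{thm:contraction_principle} immediately yields
\[
\rmd_3(T^n\nu,\gamma)\leq 2^{-n/2}\rmd_3(\nu,\gamma)\xrightarrow[n\to\infty]{}0,
\]
which settles the claim along the subsequence $N=2^n$.

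To upgrade this to arbitrary $N$, I would redo the algebraic core of the proof of Theorem~\ref{thm:contraction_principle} in one shot rather than iteratively. Let $\nu_N$ be the law of $N^{-1/2}\sum_{k=1}^N X_k$, so that $\varphi_{\nu_N}(\xi)=\varphi_\nu(\xi/\sqrt{N})^N$, while stability of the Gaussian gives $\varphi_\gamma(\xi)=\varphi_\gamma(\xi/\sqrt{N})^N$. Using the telescoping identity $a^N-b^N=(a-b)\sum_{k=0}^{N-1}a^k b^{N-1-k}$ together with the bound $\normsup{\varphi_\nu},\normsup{\varphi_\gamma}\leq 1$, I obtain
\[
|\varphi_{\nu_N}(\xi)-\varphi_\gamma(\xi)|\leq N\,|\varphi_\nu(\xi/\sqrt{N})-\varphi_\gamma(\xi/\sqrt{N})|.
\]
Dividing by $|\xi|^3$ and substituting $\eta=\xi/\sqrt{N}$ gives $\rmd_3(\nu_N,\gamma)\leq N^{-1/2}\rmd_3(\nu,\gamma)\to 0$, which is the desired convergence in $\rmd_3$. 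Weak convergence then follows from the second half of Lemma~\ref{lem:Finite_metric_space_struct}.

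I expect no real obstacle. The only subtlety worth flagging is that Theorem~\ref{thm:contraction_principle}, as stated, delivers convergence only along powers of $2$; handling arbitrary $N$ requires the short direct computation above, which is morally the same contraction estimate applied to the $N$-fold convolution rather than iterated through $n$ dyadic doublings.
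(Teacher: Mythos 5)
Your proof is correct and follows essentially the same route as the paper: convergence along $N=2^n$ from Theorem~\ref{thm:contraction_principle} plus finiteness of $\rmd_3(\nu,\gamma)$, and then the direct bound $\rmd_3(\nu_N,\gamma)\leq N^{-1/2}\rmd_3(\nu,\gamma)$ for general $N$. Your telescoping identity $a^N-b^N=(a-b)\sum_{k=0}^{N-1}a^kb^{N-1-k}$ is just the $N$-fold unrolling of the paper's ideal-metric inequalities~\eqref{eq:ideal metric} (subadditivity under convolution, which the paper derives from $|ab-cd|\leq|a-c||b|+|b-d||c|$, combined with the scaling bound $\rmd_3([\nu]_\lambda,[\mu]_\lambda)\leq\lambda^3\rmd_3(\nu,\mu)$), and it yields exactly the same rate.
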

The claim along the sequence \(N=1,2,4,8,\dots\) (or any geometric sequence) follows directly from Theorem~\ref{thm:contraction_principle}, Lemma~\ref{lem:stability}, and the fact that \(\rmd_3\) is a \emph{finite} distance on \(\calQ_3\) (by Lemma~\ref{lem:Finite_metric_space_struct}). Extending this to arbitrary sequences can be done in several ways, which shall not be exposed here. It is worth stressing out that extending the result to arbitrary sequences is never simpler than using stability of Gaussian and scaling+convolution properties of \(\rmd_3\) to obtain the CLT directly. Indeed,
\begin{equation}
\label{eq:ideal metric}
\begin{gathered}
	\rmd_3(\nu_1*\nu_2, \mu_1*\mu_2) \leq \rmd_3(\nu_1, \mu_1)+\rmd_3(\nu_2, \mu_2),\\
	\rmd_3([\nu]_{\lambda},[\nu]_{\lambda}) \leq \lambda^3 \rmd_3(\nu,\mu),
\end{gathered}
\end{equation}where \([\nu]_{\lambda}\) is the law of \(\lambda X, X\sim \nu\). The first point follows from \(|ab-cd|\leq |a-c||b| + |b-d||c|\), and the definition. Therefore, by the properties of the Gaussian,
\begin{equation*}
	\rmd_3([\nu^{*n}]_{n^{-1/2}}, \gamma) = \rmd_3([\nu^{*n}]_{n^{-1/2}}, [\gamma^{*n}]_{n^{-1/2}}) \leq \frac{1}{n^{3/2}} \rmd_3(\nu^{*n}, \gamma^{*n})\leq \frac{\rmd_3(\nu, \gamma)}{n^{1/2}}.
\end{equation*}

\section{A ``Lyapunov proof'' under a second moment condition}
\label{sec:Lypunov_proof}

As said in the introduction, the following proof is a simplified version of an argument that has been sent to me by Jiwoon Park. It is presented here with his kind permission.

Let
\begin{gather*}
	\calQ_2 = \{\nu\in \calP_2:\ E_{\nu}(X) =0,\ E_{\nu}(X^2) = 1 \},\\
	\rmd_2(\nu,\mu) = \sup_{\xi\in \R^*} \frac{|\varphi_{\nu}(\xi)- \varphi_{\mu}(\xi)|}{\xi^2}.
\end{gather*}Then, \(\rmd_2\) is a (finite) distance on \(\calQ_2\) (which metrizes weak convergence). The problem with \(\rmd_2\) is that it is only \(2\)-ideal (see Section~\ref{sec:conclusion}), so the argument of the previous Section can not be used with this metric (but still gives continuity of \(T\)). Yet one can still use it as a Lyapunov function.

\begin{theorem}
	\label{thm:Lyaponuv_contraction}
	Let \(V:\calQ_2\to \R_+\) be defined by \(V(\nu) = \rmd_2(\nu,\gamma)\). Then, \(V\) satisfies
	\begin{enumerate}
		\item \label{thm:Lyap:item:positivity} \(V(\gamma) =0\), and \(V(\nu)>0\) for any \(\nu\in \calQ_2\setminus \{\gamma\}\).
		\item \label{thm:Lyap:item:continuity} \(V\) is continuous on \((\calQ_2,\rmd_2)\).
		\item \label{thm:Lyap:item:bnded} \(V\) has bounded level sets.
		\item \label{thm:Lyap:item:monotonicity} \(V(T\nu)< V(\nu)\) for every \(\nu\in \calQ_2\setminus \{\gamma\}\).
	\end{enumerate}
\end{theorem}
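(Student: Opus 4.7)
The first three properties are essentially formal consequences of \(V\) being the distance-to-\(\gamma\) in the metric \(\rmd_2\). For~\ref{thm:Lyap:item:positivity}, \(V(\gamma)=0\) is tautological, while \(V(\nu)=0\) forces \(\varphi_\nu\equiv\varphi_\gamma\) on \(\R^*\), hence on \(\R\) by continuity, so \(\nu=\gamma\) by uniqueness of characteristic functions. For~\ref{thm:Lyap:item:continuity}, the triangle inequality for \(\rmd_2\) gives \(|V(\nu)-V(\mu)|\leq \rmd_2(\nu,\mu)\), i.e.\ \(V\) is \(1\)-Lipschitz. For~\ref{thm:Lyap:item:bnded}, the level set \(\{V\leq M\}\) is the closed \(\rmd_2\)-ball of radius \(M\) centred at \(\gamma\), and has \(\rmd_2\)-diameter at most \(2M\) by triangle inequality.

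All the content is in~\ref{thm:Lyap:item:monotonicity}. The plan is to reuse the computation from the proof of Theorem~\ref{thm:contraction_principle}: starting from \(\varphi_{T\nu}(\xi)=\varphi_\nu(\xi/\sqrt{2})^2\), the fixed-point identity \(T\gamma=\gamma\), the factorization \(a^2-b^2=(a-b)(a+b)\), and the substitution \(\eta=\xi/\sqrt{2}\), one arrives at the representation
\begin{equation*}
V(T\nu) \;=\; \sup_{\eta\in\R^*} f(\eta)\,g(\eta),\qquad f(\eta):=\frac{\big|\varphi_\nu(\eta)-\varphi_\gamma(\eta)\big|}{\eta^2},\quad g(\eta):=\frac{\big|\varphi_\nu(\eta)+\varphi_\gamma(\eta)\big|}{2}.
\end{equation*}
Since \(g\leq 1\) this already reproduces the non-strict bound \(V(T\nu)\leq V(\nu)\). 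The strict gain comes from the pointwise observation that \(g(\eta)<1\) for every \(\eta\neq 0\): indeed \(|\varphi_\gamma(\eta)|=e^{-\eta^2/2}<1\) together with \(|\varphi_\nu(\eta)|\leq 1\) yields \(|\varphi_\nu(\eta)+\varphi_\gamma(\eta)|\leq 1+e^{-\eta^2/2}<2\).

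To upgrade this pointwise strict inequality into a strict inequality of suprema, I would show that \(fg\) actually attains its supremum on \(\R^*\). For this I would check that \(f\) is continuous on \(\R^*\), tends to \(0\) at \(\pm\infty\) (its numerator is bounded by \(2\) while the denominator diverges), and tends to \(0\) as \(\eta\to 0\): the second-moment Taylor expansion \(\varphi_\nu(\eta)=1-\eta^2/2+o(\eta^2)\), valid for every \(\nu\in\calQ_2\), together with its analogue for \(\gamma\), gives \(\varphi_\nu(\eta)-\varphi_\gamma(\eta)=o(\eta^2)\). Extending \(fg\) by \(0\) at the origin, \(fg\) becomes a non-negative continuous function on \(\R\) vanishing at both \(0\) and \(\pm\infty\), so its supremum is attained. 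If that supremum is \(0\), then \(V(T\nu)=0<V(\nu)\) by~\ref{thm:Lyap:item:positivity}; otherwise the maximiser \(\eta_0\) necessarily lies in \(\R^*\), and there \(V(T\nu)=f(\eta_0)g(\eta_0)<f(\eta_0)\leq V(\nu)\).

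The main obstacle is precisely this last step: the pointwise strict bound \(g<1\) on \(\R^*\) on its own only delivers the non-strict \(V(T\nu)\leq V(\nu)\), and some form of compactness is needed to close the gap. The soft compactness used above, coming from \(f\to 0\) at both \(0\) and \(\pm\infty\), is the cleanest route I see; a quantitative lower bound on \(1-g\) restricted to the frequency range where \(f\) is near its supremum would also work, but looks heavier.
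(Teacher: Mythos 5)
Your proof is correct, and for item~\ref{thm:Lyap:item:monotonicity} it shares the paper's core identity but closes the argument differently. Both proofs start from \(V(T\nu)=\sup_{\eta\in\R^*}f(\eta)g(\eta)\) with \(f(\eta)=|\varphi_\nu(\eta)-\varphi_\gamma(\eta)|/\eta^2\) and \(g(\eta)\leq\tfrac{1}{2}(1+e^{-\eta^2/2})<1\) for \(\eta\neq 0\), and both must then exclude the possibility that the supremum is only approached as \(\eta\to 0\) (where \(g\to 1\)); this is handled in both cases by the second-order Taylor expansion \(\varphi_\nu(\eta)-\varphi_\gamma(\eta)=\eta^2(h_\nu(\eta)-h_\gamma(\eta))=o(\eta^2)\). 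The difference is the finishing move: the paper makes the estimate quantitative, choosing \(b>0\) with \(\sup_{|\eta|<b}\max(|h_\nu|,|h_\gamma|)<V(\nu)/4\) and bounding the supremum separately by \(V(\nu)/2\) on \(|\eta|<b\) and by \(\tfrac{1}{2}(1+e^{-b^2/2})V(\nu)\) on \(|\eta|\geq b\), which yields an explicit (\(\nu\)-dependent) contraction factor; you instead note that \(f\) vanishes at \(0\) and at \(\pm\infty\), so \(fg\) attains its supremum at some \(\eta_0\in\R^*\) where \(g(\eta_0)<1\) strictly. Your route is softer and arguably cleaner, at the cost of the extra (easy) observation that \(f\to 0\) at infinity and of losing the explicit rate. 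One side remark on item~\ref{thm:Lyap:item:bnded}: your reading of ``bounded level sets'' as \(\rmd_2\)-boundedness is the literal one and matches the statement, but what the subsequent corollary actually needs is compactness of the sublevel sets, which comes from tightness of \(\calQ_2\) (uniform second moments) rather than from metric boundedness; the paper is equally terse on this point, so it is not a defect of your argument relative to the claim as stated.
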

\begin{proof}
	Items~\ref{thm:Lyap:item:positivity},~\ref{thm:Lyap:item:continuity}, and~\ref{thm:Lyap:item:bnded} are obvious as \(\rmd_2\) is a distance that metrizes weak convergence. Only item~\ref{thm:Lyap:item:monotonicity} requires some care. Let \(\nu\neq \gamma\). Then,
	\begin{equation*}
		V(T\nu) = \sup_{\xi\in \R^*} \frac{|\varphi_{\nu}(\xi)- \varphi_{\gamma}(\xi)|}{\xi^2}\frac{|\varphi_{\nu}(\xi) + \varphi_{\gamma}(\xi)|}{2}\leq \sup_{\xi\in \R^*} \frac{|\varphi_{\nu}(\xi)- \varphi_{\gamma}(\xi)|}{\xi^2}\frac{1+e^{-\xi^2/2}}{2}.
	\end{equation*}Now, by second order Taylor expansion,
	\begin{equation*}
		\varphi_{*}(\xi) = 1-\frac{\xi^2}{2} + \xi^2h_*(\xi)
	\end{equation*}with \(h_*(\xi) \xrightarrow{\xi\to 0} 0\). Let \(a = V(\nu) = \rmd_2(\nu,\gamma)>0\), and \(b>0\) be such that \linebreak \(\sup_{|\xi|<b} \max(|h_{\nu}(\xi)|, |h_{\gamma}(\xi)|)< \frac{a}{4}\). Then, the last term of the last display is less than or equal to
	\begin{multline*}
		\max\Big(\sup_{|\xi|\geq b} \frac{|\varphi_{\nu}(\xi)- \varphi_{\gamma}(\xi)|}{\xi^2}\frac{1+e^{-b^2/2}}{2} , \sup_{\xi\in (-b,b)\setminus \{0\}} |h_{\nu}(\xi)- h_{\gamma}(\xi)|\Big)\leq \\
		\leq \max\Big(\frac{1+e^{-b^2/2}}{2} a , \frac{a}{2} \Big) < a,
	\end{multline*}proving the last item.
\end{proof}

The immediate consequence is
\begin{corollary}
	Let \(\nu\in \calQ_2\). Let \(X_1,X_2,\dots\) be an i.i.d. sequence of law \(\nu\). Then,
	\begin{equation}
		\frac{1}{2^{n/2}}\sum_{k=0}^{2^n} X_i \xrightarrow{n\to\infty} \calN(0,1).
	\end{equation}
\end{corollary}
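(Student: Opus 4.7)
The plan is to iterate $T$ starting from $\nu$ and use Theorem~\ref{thm:Lyaponuv_contraction} in the spirit of LaSalle's invariance principle. Set $\mu_n := T^n\nu$; the second-moment analogue of Lemma~\ref{lem:stability} yields $\mu_n \in \calQ_2$, and unrolling $T\mu=[\mu^{*2}]_{2^{-1/2}}$ identifies $\mu_n$ with the law of $2^{-n/2}\sum_{k=1}^{2^n}X_k$. By item~\ref{thm:Lyap:item:monotonicity}, $V(\mu_n)$ is strictly decreasing (unless some $\mu_n=\gamma$, which makes the conclusion immediate), so $V(\mu_n)\downarrow\ell$ for some $\ell\geq 0$. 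Since convergence in $\rmd_2$ implies weak convergence, it suffices to show $\ell=0$.

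I would argue $\ell=0$ by compactness and invariance. Chebyshev together with $E_{\mu_n}(X^2)=1$ gives tightness of $(\mu_n)$, so Prokhorov extracts a weakly convergent subsequence $\mu_{n_k}\to\mu^\star$. Lower semi-continuity of $V$ (as a pointwise supremum of weak-continuous functionals $\mu\mapsto|\varphi_\mu(\xi)-\varphi_\gamma(\xi)|/\xi^2$) gives $V(\mu^\star)\leq\ell$; weak continuity of $T$, via $\varphi_{T\mu}(\xi)=\varphi_\mu(\xi/\sqrt 2)^2$, propagates this to $V(T^j\mu^\star)\leq\ell$ for all $j\geq 0$. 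If $\mu^\star$ belonged to $\calQ_2$, item~\ref{thm:Lyap:item:monotonicity} would force $V(T\mu^\star)<V(\mu^\star)$ for $\mu^\star\neq\gamma$; assuming moreover continuity (not merely LSC) of $V$ at $\mu^\star$, one would get $V(\mu^\star)=\ell$ exactly, hence $V(T\mu^\star)<\ell$, contradicting both $V(\mu_{n_k+1})\to\ell$ and $\mu_{n_k+1}\to T\mu^\star$ weakly. Thus every subsequential weak limit equals $\gamma$, and $\mu_n\to\gamma$ weakly.

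The main obstacle is the pair of implicit regularity assumptions: (i) $\mu^\star\in\calQ_2$, i.e.\ the second moment is preserved in the weak limit, and (ii) upper semi-continuity of $V$ at $\mu^\star$. Both follow from uniform integrability of $X^2$ under $(\mu_n)$, which does hold for normalised i.i.d.\ partial sums by a classical truncation argument (decompose each $X_k$ at level $A$, use $E_\nu(X^2\mathbf 1_{|X|>A})\to 0$, and control the bounded part via the $L^2$-bound $E_{\mu_n}(X^2)=1$), but is not completely immediate from the second-moment hypothesis alone. A cleaner alternative that bypasses the compactness step entirely is to combine the iterated identity $\varphi_{\mu_n}(\xi)=\varphi_\nu(\xi/2^{n/2})^{2^n}$ with the Taylor expansion $\varphi_\nu(\xi)=1-\xi^2/2+\xi^2 h_\nu(\xi)$, $h_\nu(\xi)\to 0$ at $0$, yielding $\varphi_{\mu_n}(\xi)\to e^{-\xi^2/2}$ pointwise and hence $\mu_n\to\gamma$ weakly by Lévy's continuity theorem; as the author notes, this is essentially the same computation driving item~\ref{thm:Lyap:item:monotonicity}, so the Lyapunov-style plan above can be viewed as its streamlined packaging.
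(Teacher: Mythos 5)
Your proposal is correct and follows essentially the same route as the paper: the paper's proof is exactly the LaSalle-type argument you describe (monotone decrease of $V$ along the orbit, extraction of a convergent subsequence, invariance of the limit value under $T$, and item~\ref{thm:Lyap:item:monotonicity} forcing the cluster point to be $\gamma$). The one substantive difference is that you are \emph{more} careful than the paper at the compactness step. The paper simply declares that the sublevel set $D=\{\mu\in\calQ_2:\ V(\mu)\leq V(\nu)\}$ is compact ``by items~\ref{thm:Lyap:item:bnded} and~\ref{thm:Lyap:item:continuity}'', but bounded level sets plus continuity do not give compactness in this infinite-dimensional, non-complete metric space: a weak subsequential limit of measures in $\calQ_2$ can lose second-moment mass and fall out of $\calQ_2$, and weak convergence alone does not give $\rmd_2$-convergence near $\xi=0$. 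Your observation that both issues are resolved by uniform integrability of $X^2$ along the orbit $(T^n\nu)_n$ --- which holds for normalised i.i.d.\ partial sums by the classical truncation argument --- is precisely the missing justification, so your version is the more complete one. Your closing remark, that the whole scheme is a repackaging of the pointwise convergence of $\varphi_{\nu}(\xi/2^{n/2})^{2^n}$ to $e^{-\xi^2/2}$ plus L\'evy continuity, is also made by the author in the introduction, so you have correctly located where the real content lies.
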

\begin{proof}
	The proof is the standard proof that global asymptotic stability is implied by the existence of a suitable Lyapunov function. It is included for the reader convenience. Let \(V(\nu) = \rmd_2(\nu,\gamma)\). The claim is equivalent to \(V(T^n \nu) \to 0\) as \(n\to \infty\) for any \(\nu\in \calQ_2\). Fix \(\nu\in\calQ_2\). Define
	\begin{equation*}
		D = \{\mu\in \calQ_2:\ 0\leq V(\mu)\leq V(\nu) \}.
	\end{equation*}By items~\ref{thm:Lyap:item:bnded},\ref{thm:Lyap:item:continuity} of Theorem~\ref{thm:Lyaponuv_contraction}, \(D\) is compact. Moreover, as \(V(T\mu)\leq V(\mu)\), \(\nu_n \equiv T^n \nu\in D\) for all \(n\geq 0\). The sequence \(V(\nu_n)\) is then a bounded decreasing sequence, denote \(a\geq 0\) its limit. By continuity of \(V\), every cluster point, \(\nu_*\), of \((\nu_n)_n\) has \(V(\nu_*) = a\). Now, if \((\nu_{n_k})_k\) is a converging subsequence of \((\nu_{n})_n\), so is \((\nu_{n_k+1})_k\) (by continuity of \(T\)). Moreover, the limit of \((\nu_{n_k+1})_k\) is \(T\nu_*\) where \(\nu_*\) is the limit of \((\nu_{n_k})_k\). So, \(a = V(T\nu_*) \leq V(\nu_*) = a\) with equality only if \(\nu_*=\gamma\) (by Theorem~\ref{thm:Lyaponuv_contraction}, item~\ref{thm:Lyap:item:monotonicity}). So either \(\nu_* = \gamma\) or \(a=0\) which imply each other.
\end{proof}

\section{Additional remarks}
\label{sec:conclusion}

\subsection*{About the metric}
The metrics \(\rmd_3,\rmd_2\) (and their generalizations \(\rmd_s\), see~\cite{Goudon+Junca+Toscani-2002}) as well as the Zolotarev metric mentioned in the introduction and used in~\cite{Neininger+Ruschendorf-2004, Neininger-2007}, belong to a class of metric on probability measures which all allow the same type of argument. These metric satisfy
\begin{gather}
\label{eq:ideal_convolution}
	d(\nu*\eta,\mu*\eta)\leq d(\nu,\mu),\\
\label{eq:ideal_scaling}
	d([\nu]_{\lambda},[\mu]_{\lambda}) \leq \lambda^{s} d(\nu,\mu),
\end{gather}for some \(s>0\). Such a metric with equality in~\eqref{eq:ideal_scaling} is call \(s\)\emph{-ideal}. Suppose that the metric is defined as a supremum over a suitable class of test function of weighted expectation difference:
\begin{equation}
\label{eq:zeta_struct}
	d(\nu,\mu) = \sup_{f\in F} w(f) \big| E_{\nu}(f) - E_{\mu}(f) \big|,
\end{equation}such metric is apparently said to have a \emph{\(\zeta\)-structure}. Suppose that one can construct a metric \(d\) satisfying~\eqref{eq:ideal_convolution},~\eqref{eq:ideal_scaling} with \(s>2\), and~\eqref{eq:zeta_struct}. Further suppose that convergence in the topology induced by \(d\) implies weak convergence. Then one can perform the same argument, the key being
\begin{align*}
	d(\nu*\nu,\mu*\mu) &= \sup_{f\in F} w(f) \big| E_{\nu\otimes\nu}(f(X+Y)) - E_{\mu\otimes\mu}(f(X+Y)) \big|\\
	&\leq \sup_{f\in F} w(f) \big| E_{\nu\otimes\nu}(f(X+Y)) - E_{\nu\otimes\mu}(f(X+Y))\big| \\
	&\quad + \big|E_{\nu\otimes\mu}(f(X+Y)) - E_{\mu\otimes\mu}(f(X+Y)) \big|\\
	&\leq 2 d(\nu, \mu).
\end{align*}
It is also clear that the same type of argument as in Section~\ref{sec:Banach_FP} with \(s=2\) will not work so easily, so a different idea is required. Limitations of the method are discussed in~\cite{Neininger+Ruschendorf-2004}. One can also wonder if a ``Banach'' proof can even exist. An argument going in this direction is~\cite{Bessaga-1959}: if \(T:\calX\to\calX\) is such that \(T,T^2,T^3,\dots\) have a unique fixed point, one can construct a metric on \(\calX\) such that \(T\) is a contraction (with arbitrary contraction constant). But the argument is non-constructive, uses the axiom of choice, and the metric has no chance of metrizing weak topology. I would tend to think that the general claim is wrong but that the following weaker claim could true: one can find a sequence \(\calQ_2^{(1)}\subset \calQ_2^{(2)}\subset \dots\) converging to \(\calQ_2\) (i.e.: \(\nu\in \calQ_2\) implies there is \(n\geq 1\) such that \(\nu\in \calQ_2^{(n)}\)) and a sequence of metrics \(d_1,d_2,\dots\) on those spaces such that \(T\calQ_2^{(n)}\subset \calQ_2^{(n)}\) and \(T\) is a contraction on \((\calQ_2^{(n)}, d_n)\) (and \((\calQ_2^{(n)}, d_n)\) is complete, and \(d_n\) metrizes weak convergence on \(\calQ_2^{(n)}\)...). Any comment on this question would be welcome!

\section*{Acknowledgements}

Thanks to Nicolas Curien for suggesting me to contact Ralph Neininger, and to Ralph Neininger for pointers to~\cite{Neininger+Ruschendorf-2004,Neininger-2007}, and comments on the general picture. Also thanks to Jiwoon Park for sending me the argument which led to Section~\ref{sec:Lypunov_proof}. Finally, thanks to all the people who send me information and comments after the first version went online.

The author is supported by the Swiss NSF grant 200021\_182237 and is a member of the NCCR SwissMAP.

\appendix

\section{Fourier based distance}
\label{app:Metric_structure}

This Section contains a proof of Lemma~\ref{lem:Finite_metric_space_struct}. The proof is by no mean new, it is included for the reader convenience.

First, note that \(\rmd_{3}\) is a distance: symmetry is obvious, and separation follows from the fact that two probability measures are the same if and only if they have the same characteristic function. Triangular inequality follows from triangular inequality for the absolute value, and from \(\sup f+g \leq \sup f + \sup g\).

Then, show that \(\rmd_{3}\) is finite on \(\calQ_3\times \calQ_3\). Take \(\nu,\mu\in \calQ_3\). As \(\nu,\mu\) have a third moment, \(\varphi_{\nu},\varphi_{\mu}\) are three times continuously differentiable and they admit a Taylor expansion at \(0\):
\begin{equation}
	\varphi_*(\xi) = 1 - \frac{\xi^2}{2} - \rmi\frac{E_{*}(X^3) \xi^3}{6} + h_*(\xi) \xi^3,
\end{equation}with \(h_*(\xi) \xrightarrow{\xi\to 0} 0\) bounded uniformly over \([-1,1]\) (as it is continuous over \(\R\)), \(*\in \{\nu,\mu\}\). So,
\begin{equation*}
	\rmd_3(\nu,\mu) \leq 6^{-1}\sup_{0<|\xi|<1} \big|- \rmi E_{\nu}(X^3) + 6h_{\nu}(\xi)  + \rmi E_{\mu}(X^3) - 6h_{\mu}(\xi) \big| + 2<\infty.
\end{equation*}

Finally, convergence in \(\rmd_3\) implies pointwise convergence of characteristic functions, as well as continuity of the limit at \(0\), which is equivalent to weak convergence by Levy's continuity theorem.

\bibliographystyle{plain}
\bibliography{BIGbib}

\end{document}